\documentclass[11pt,reqno]{amsart}
\usepackage{silence}
\usepackage[T1]{fontenc}
\usepackage{microtype}
\usepackage{amsmath,amssymb,amsthm,mathtools}
\usepackage{xcolor}
\usepackage[hypertexnames=false,colorlinks=true,linkcolor=blue!55!black,citecolor=blue!55!black,urlcolor=blue!55!black]{hyperref}
\usepackage[nameinlink,noabbrev]{cleveref}
\usepackage{a4wide}
\usepackage{palatino}
\usepackage{autonum}

\numberwithin{equation}{section}

\newtheorem{theorem}{Theorem}[section]
\newtheorem{proposition}[theorem]{Proposition}
\newtheorem{lemma}[theorem]{Lemma}
\newtheorem{corollary}[theorem]{Corollary}

\newcommand{\Ric}{\operatorname{Ric}}
\newcommand{\Rm}{\operatorname{Riem}}
\newcommand{\tr}{\operatorname{tr}}
\newcommand{\diver}{\operatorname{div}}
\newcommand{\Id}{\operatorname{Id}}

\newcommand{\dd}{\,d}

\newcommand{\overlinegrad}{\overline\nabla}
\DeclareRobustCommand{\rchi}{{\mathpalette\irchi\relax}}
\newcommand{\irchi}[2]{\raisebox{+1.7pt}{$#1\chi$}}

\title{Short--time existence for the Schouten flow}

\author{Giovanni Catino}
\address{Dipartimento di Matematica, Politecnico di Milano,
Piazza Leonardo da Vinci 32, 20133 Milano, Italy}
\email{giovanni.catino@polimi.it}

\author{Carlo Mantegazza}
\address{Dipartimento di Matematica e Applicazioni ``Renato Caccioppoli'', Universit\`a di Napoli Federico II, Italy}
\email[C. Mantegazza]{carlo.mantegazza@unina.it}

\subjclass[2020]{Primary 53E20; Secondary 35K59, 35M31}

\keywords{Schouten tensor, Ricci--Bourguignon flow, geometric evolution equations, DeTurck trick, composite parabolic--hyperbolic systems}

\date{\today}

\begin{document}

\begin{abstract}
Let $(M^n,g_0)$ be a closed smooth Riemannian manifold, with $n\geqslant 3$. We prove short--time existence and uniqueness for the ``critical'' {\em Ricci--Bourguignon flow}
\begin{equation}
\partial_t g=-2\Ric_g+\frac{R_g}{n-1}g\,.
\end{equation}
Up to a constant rescaling of time, this is the {\em Schouten flow}, since the right--hand side of the equation is $-2(n-2)$ times the Schouten tensor. At the critical parameter, however, the principal symbol of the linearization of the operator that we obtain after the usual ``DeTurck modification'' still has a zero eigenvalue. We resolve this degeneracy by adjoining an independent scalar unknown $r$, intended to represent the scalar curvature, and considering an ``extended'' system consisting of a strictly parabolic equation for the metric coupled to a transport--reaction equation for $r$ with a curvature--square forcing. The extended system belongs to the class of nonlinear composite parabolic--hyperbolic systems studied by Vol'pert and Khudyaev~\cite{VolpertKhudyaev1972}. Then, a standard compact--manifold adaptation of their theorem yields a unique smooth solution of the extended system. Finally, the difference $R_g-r$ satisfies a homogeneous linear parabolic equation, so the scalar--curvature constraint ``propagates'' and the original critical Ricci--Bourguignon flow is recovered. This settles the short--time existence and uniqueness problem at the critical ``Schouten'' value $\rho=1/(2(n-1))$, which was left open by the previous Ricci--Bourguignon theory in~\cite{CatinoEtAl2017}.
\end{abstract}

\maketitle

\section*{AI usage statement}
The strategy leading to the main argument was suggested during a series of interactions with ChatGPT~5.6~Pro. The authors subsequently checked, developed and independently verified all mathematical arguments and take full responsibility for the results and for the final manuscript.

\section{Introduction}

For any real parameter $\rho$, we consider the following {\em Ricci--Bourguignon flow}
\begin{equation}\label{eq:RB}
\partial_tg=-2\bigl(\Ric_g-\rho R_g g\bigr)\,,
\end{equation}
whose study was proposed by J.--P. Bourguignon~\cite[Question~3.24]{Bourguignon1981}. For $\rho<\frac{1}{2(n-1)}$, short--time existence and uniqueness on closed manifolds were proved in~\cite{CatinoEtAl2017} by means of the {\em DeTurck trick}~\cite{DeTurck1983}, as for the Ricci flow. At the critical value
\begin{equation}
\rho_c=\frac{1}{2(n-1)}\,,
\end{equation}
the principal symbol has one further zero eigenvalue in addition to the eigenvalues coming from the diffeomorphism invariance. Consequently, after the usual DeTurck modification is introduced, a scalar zero eigenvalue still remains and the resulting system is not strictly parabolic. This borderline case was therefore left open in~\cite{CatinoEtAl2017}. The purpose of this paper is to settle it.

The critical equation is given by
\begin{equation}\label{eq:critical}
\partial_tg=-2\Ric_g+\alpha R_g g\qquad\text{ with }\qquad \alpha=\frac{1}{n-1}
\end{equation}
and if
\begin{equation}
A_g=\frac{1}{n-2}\biggl(\Ric_g-\frac{R_g}{2(n-1)}g\biggr)
\end{equation}
denotes the Schouten tensor (see, for instance,~\cite{Besse1987}), then the right--hand side of equation~\eqref{eq:critical} equals $-2(n-2)A_g$. Therefore, after the linear time change $\tau=(n-2)t$, equation~\eqref{eq:critical} becomes the {\em Schouten flow}
\begin{equation}
 \partial_\tau g=-2A_g
\end{equation}
(notice that in dimension three no time rescaling is needed).

Our main result is the following.

\begin{theorem}\label{thm:main}
Let $M^n$ be a closed smooth manifold, $n\geqslant 3$, and let $g_0$ be a smooth Riemannian metric on $M$. There exist $T>0$ and a unique
\begin{equation}
g\in C^\infty(M\times[0,T))
\end{equation}
satisfying
\begin{equation}\label{eq:main-ivp}
\partial_tg=-2\Ric_g+\frac{R_g}{n-1}g\qquad\text{ and }\qquad g(0)=g_0\,.
\end{equation}
\end{theorem}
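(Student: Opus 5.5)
We follow the strategy outlined in the abstract. First, we compute the evolution of the scalar curvature along a solution of \eqref{eq:critical}. For a variation $\partial_t g=h$ one has $\partial_t R=-\Delta\tr h+\diver\diver h-\langle h,\Ric\rangle$. With $h=-2\Ric+\alpha Rg$, the contracted Bianchi identity gives $\diver\diver\Ric=\tfrac12\Delta R$, so $\diver\diver h=-\Delta R+\alpha\Delta R$. Moreover $\tr h=(n\alpha-2)R$, hence $-\Delta\tr h=(2-n\alpha)\Delta R$. The coefficient of $\Delta R$ is therefore $(2-n\alpha)-1+\alpha=1-(n-1)\alpha=0$ exactly at $\alpha=1/(n-1)$. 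This is the source of the degeneracy, and it also yields the identity
\begin{equation}
\partial_tR=2|\Ric|^2-\alpha R^2,
\end{equation}
an ODE in $R$ with forcing quadratic in curvature. The point is to use it to decouple the bad mode.

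Next, we introduce an independent scalar $r$ and the extended system
\begin{equation}
\partial_tg=-2\Ric_g+\alpha\, r\,g+\mathcal L_{W}g,\qquad \partial_tr=2|\Ric_g|^2-\alpha r^2+\langle W,\nabla r\rangle,
\end{equation}
with $W$ the DeTurck vector field relative to $g_0$, and initial data $g(0)=g_0$, $r(0)=R_{g_0}$. The term $r$ in the metric equation is zeroth order in $g$, so the metric equation is a DeTurck--Ricci flow and is strictly parabolic in $g$. The $r$-equation is a transport--reaction equation whose forcing involves second derivatives of $g$, i.e.\ it is of the same order as the parabolic variable's highest term. This is precisely the composite parabolic--hyperbolic structure of Vol'pert--Khudyaev, so we adapt their theorem to the closed manifold setting, using a partition of unity or working directly in H\"older spaces with parabolic weights for $g$ and one lower regularity for $r$. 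This gives a unique smooth solution on $[0,T)$. I expect this step to be the main obstacle: one must check the regularity bookkeeping, namely that $|\Ric|^2\in C^{k-2}$ feeds $r\in C^{k-2}$, which in turn feeds the zeroth order term of the $g$-equation without loss. A fixed point argument in which $r$ is recovered by integrating along the flow of $W$ should close, since $r$ enters the $g$-equation undifferentiated.

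Then, we propagate the constraint. Let $u=R_g-r$. Using the variation formula with $h=-2\Ric+\alpha rg+\mathcal L_Wg$ and the diffeomorphism term contributing $\langle W,\nabla R\rangle$, we find $\partial_tR=-\Delta((n\alpha-2)... )$. Carefully, now $\tr h$ and $\diver\diver h$ contain $r$ instead of $R$ in the $\alpha$ parts, which gives $\partial_tR=\Delta R-(n-1)\alpha\Delta r+2|\Ric|^2-\alpha rR+\langle W,\nabla R\rangle$. Hence
\begin{equation}
\partial_t u=\Delta u+\langle W,\nabla u\rangle-\alpha r\,u,
\end{equation}
a linear parabolic equation with $u(0)=0$, so the maximum principle gives $u\equiv0$. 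Thus $g$ solves the DeTurck-modified critical flow, and pulling back by the diffeomorphisms solving $\partial_t\varphi=-W\circ\varphi$ gives existence for \eqref{eq:main-ivp}.

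For uniqueness, we take two solutions of \eqref{eq:main-ivp}. For each we solve the harmonic map heat flow to $(M,g_0)$, which is strictly parabolic and independent of the critical degeneracy. The pushed-forward metrics, together with their scalar curvatures as $r$, solve the extended system with the same data, so they coincide by the uniqueness part of the composite theorem. The diffeomorphisms then solve the same ODE $\partial_t\varphi=W\circ\varphi$ and therefore agree, which yields equal metrics, as in the standard Ricci flow argument.
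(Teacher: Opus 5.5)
Your proposal follows the paper's proof essentially step for step: the same extended system with an independent scalar $r$ and the DeTurck field relative to $g_0$, local solvability via a closed-manifold adaptation of Vol'pert--Khudyaev, propagation of the constraint through the linear equation $\partial_t(R_g-r)=\Delta(R_g-r)+W(R_g-r)-\alpha r(R_g-r)$ with the maximum principle, and uniqueness via the harmonic map heat flow, which reduces it to uniqueness for the extended system. The only points to tidy are the sign in your final ODE (the maps $\psi$ with $\widehat g=\psi^*g$ satisfy $\partial_t\psi=-W(g)\circ\psi$, matching your existence step), the cutoff needed to make the coefficients defined for all $U$ before invoking Vol'pert--Khudyaev, and the open--closed continuation argument that extends uniqueness from the short interval on which the harmonic map flow stays a diffeomorphism to all of $[0,T)$.
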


The central observation behind the proof is the scalar--curvature evolution. Along the general Ricci--Bourguignon flow, equation~\eqref{eq:RB}, one has
\begin{equation}
\partial_tR_g=\bigl(1-2(n-1)\rho\bigr)\Delta_gR_g+2|\Ric_g|_g^2-2\rho R_g^2
\end{equation}
(see~\cite{CatinoEtAl2017}, or directly from the scalar curvature variation formula). Hence, at the critical value, one has
\begin{equation}\label{eq:scalar-critical-formal}
\partial_tR_g=2|\Ric_g|_g^2-\alpha R_g^2\,.
\end{equation}
The idea is then to introduce an independent scalar unknown $r$, intended to represent $R_g$, and, after fixing the diffeomorphism invariance by the DeTurck trick, to couple the metric equation with the corresponding DeTurck--modified scalar--curvature evolution. This produces a coupled parabolic--transport system. The resulting local existence problem falls within the class of nonlinear composite systems studied by Vol'pert and Khudyaev, see~\cite[Theorem~1, pp.~520--521]{VolpertKhudyaev1972}. As their theorem is formulated for vector--valued functions on $\mathbb R^n$, we record below only the compact--manifold special case needed here. A closely related device was used by R{\aa}de~\cite{Rade1992} for the Yang--Mills heat flow: the curvature was treated as an independent unknown in a coupled system and was subsequently recovered by propagation of the corresponding constraint. The specific parabolic--transport structure arising here appears, however, to be new.

The proof is organized as follows. In \Cref{sec:extended} we introduce the extended DeTurck system and prove its smooth local existence and uniqueness by means of the theorem of Vol'pert and Khudyaev. In \Cref{sec:constraint} we prove that the constraint $r=R_g$ ``propagates'', recover the original flow and conclude the proof of \Cref{thm:main}.

\medskip

\noindent{\sc Acknowledgments.} The authors thank Francesca Oronzio for useful discussions and for her careful comments on a preliminary version of the manuscript.

\section{The extended DeTurck system}\label{sec:extended}

From now on we set $\overline g=g_0$. For every Riemannian metric $g$, we define the {\em DeTurck vector field} relative to $\overline g$ by
\begin{equation}\label{eq:W}
W^k(g)=g^{ij}\bigl(\Gamma^k_{ij}(g)-\Gamma^k_{ij}(\overline g)\bigr)
\end{equation}
and following the line of the {\em DeTurck trick}, we add the term $\mathcal L_{W(g)}g$ to the flow equation~\eqref{eq:main-ivp}, which then becomes
\begin{equation}
\partial_tg\ =-2\Ric_g+\alpha R_g g+\mathcal L_{W(g)}g\,.
\end{equation}
If $\varphi_s$ denotes the local flow generated by $W(g)$, then $R_{\varphi_s^*g}=\varphi_s^*R_g$ and since
\begin{equation}
\left.\frac{d}{ds}\right|_{s=0}\varphi_s^*g=\mathcal L_{W(g)}g\,,
\end{equation}
differentiating the previous identity at $s=0$, we get
\begin{equation}
DR_g[\mathcal L_{W(g)}g]=\mathcal L_{W(g)}R_g=W^i(g)\partial_iR_g=W(g)(R_g)\,.
\end{equation}
Since the scalar--curvature variation formula is linear in the metric variation, the additional 
term $\mathcal L_{W(g)}g$ in the metric equation contributes exactly $W(g)(R_g)$ to the scalar--curvature evolution. Hence, equation~\eqref{eq:scalar-critical-formal} becomes
\begin{equation}
\partial_tR_g=2|\Ric_g|_g^2-\alpha R_g^2+W(g)(R_g)\,.
\end{equation}
Then, we introduce an independent scalar unknown function $r$, intended to represent $R_g$, and we analyze the following extended system:
\begin{equation}\label{eq:extended}
\begin{cases}
&\partial_tg\ =-2\Ric_g+\alpha r g+\mathcal L_{W(g)}g\\
&\partial_tr\ =2|\Ric_g|_g^2-\alpha r^2+W(g)(r)\\
&g(0)\!=g_0\\
&r(0)\!=r_0
\end{cases}
\end{equation}
Here $r_0\in C^\infty(M)$ is, for the moment, arbitrary. The choice $r_0=R_{g_0}$ will be imposed only after the extended system has been solved. The first equation of system~\eqref{eq:extended} is a strictly parabolic quasilinear equation for the metric once $r$ is regarded as an independent variable.

\begin{lemma}[Principal part of the metric equation]\label{lem:deturck}
With respect to the background connection $\overlinegrad$, there holds
\begin{equation}\label{eq:deturck-principal}
-2\Ric_g+\mathcal L_{W(g)}g =g^{ij}\overline\nabla_i\overline\nabla_j g+Q(g,g^{-1},\overlinegrad g,\Rm_{\overline g})\,,
\end{equation}
where $Q$ depends smoothly on $g$ and $g^{-1}$ and contains at most one background derivative of $g$.
\end{lemma}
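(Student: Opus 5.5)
The plan is to carry out the standard DeTurck computation in local coordinates, always writing derivatives of $g$ as background covariant derivatives $\overlinegrad$. Everything is tensorial: the difference tensor $C^k_{ij}=\Gamma^k_{ij}(g)-\Gamma^k_{ij}(\overline g)$ is a genuine $(1,2)$--tensor, given by
\begin{equation}
C^k_{ij}=\tfrac12 g^{kl}\bigl(\overlinegrad_i g_{jl}+\overlinegrad_j g_{il}-\overlinegrad_l g_{ij}\bigr),
\end{equation}
and $W^k=g^{ij}C^k_{ij}$. Hence $W$ depends on $g$, $g^{-1}$ and $\overlinegrad g$ alone, with no second derivatives.

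First I will express $\Ric_g$ through $\overline g$ and $C$, using the standard formula for the curvature of $\nabla=\overlinegrad+C$:
\begin{equation}
R_{ij}(g)=\overline R_{ij}+\overlinegrad_k C^k_{ij}-\overlinegrad_i C^k_{kj}+C\ast C,
\end{equation}
where $\overline R_{ij}$ is the Ricci tensor of $\overline g$ (a contraction of $\Rm_{\overline g}$) and $C\ast C$ is quadratic in $C$. This term is therefore of the form $Q$. The second derivatives come only from $\overlinegrad C$. Differentiating $g^{kl}$ produces $g^{-1}\ast g^{-1}\ast\overlinegrad g$, so these terms are also lower order. Modulo $Q$,
\begin{equation}
-2R_{ij}\equiv -g^{kl}\bigl(\overlinegrad_k\overlinegrad_i g_{jl}+\overlinegrad_k\overlinegrad_j g_{il}-\overlinegrad_k\overlinegrad_l g_{ij}\bigr)+g^{kl}\bigl(\overlinegrad_i\overlinegrad_j g_{kl}\bigr).
\end{equation}
To obtain the last term I use $\overlinegrad_i C^k_{kj}=\tfrac12\overlinegrad_i(g^{kl}\overlinegrad_j g_{kl})$.

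Next comes the Lie derivative. Since $\mathcal L_Wg_{ij}=\nabla_iW_j+\nabla_jW_i$ with $W_j=g_{jk}W^k$, and $\nabla=\overlinegrad+C$, I get $\mathcal L_Wg_{ij}\equiv g_{jm}\overlinegrad_iW^m+g_{im}\overlinegrad_jW^m$ modulo $Q$. Moreover
\begin{equation}
g_{jm}\overlinegrad_iW^m\equiv g^{kl}\bigl(\overlinegrad_i\overlinegrad_k g_{lj}-\tfrac12\overlinegrad_i\overlinegrad_j g_{kl}\bigr).
\end{equation}
Summing the two symmetric terms gives $g^{kl}(\overlinegrad_i\overlinegrad_k g_{lj}+\overlinegrad_j\overlinegrad_k g_{li})-g^{kl}\overlinegrad_i\overlinegrad_j g_{kl}$.

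When this is added to $-2\Ric_g$, the terms $g^{kl}\overlinegrad_i\overlinegrad_jg_{kl}$ cancel. The mixed terms $-g^{kl}\overlinegrad_k\overlinegrad_ig_{jl}+g^{kl}\overlinegrad_i\overlinegrad_kg_{lj}$ cancel up to a commutator of background derivatives. That commutator equals $\Rm_{\overline g}\ast g$ and contains no derivatives of $g$, so it can be absorbed into $Q$. What survives is $g^{kl}\overlinegrad_k\overlinegrad_lg_{ij}$, which is the claim.

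The only delicate point is bookkeeping. I must check that every discarded term has at most one $\overlinegrad$ on $g$. I must also remember that swapping $\overlinegrad_k\overlinegrad_i$ costs curvature of $\overline g$, and not of $g$; this is why $\Rm_{\overline g}$ appears in $Q$. I do not expect any genuine obstacle beyond getting the index symmetries and the factor $\tfrac12$ in $W$ right.
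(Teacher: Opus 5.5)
Your proposal is correct and takes the same route as the paper. You write $\Ric_g$ through the difference tensor $\Gamma(g)-\Gamma(\overline g)$, observe that second derivatives of $g$ enter only through $\overline\nabla C$, and cancel the non-elliptic ones against $\mathcal L_{W(g)}g$, absorbing the $\Rm_{\overline g}$ commutator terms into $Q$; the only difference is that you carry out explicitly the index bookkeeping that the paper delegates to the standard Ricci--DeTurck identity in DeTurck and Topping.
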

\begin{proof}
The difference of connections
\begin{equation}
T^k_{ij}=\Gamma^k_{ij}(g)-\Gamma^k_{ij}(\overline g)
\end{equation}
is a tensor and satisfies
\begin{equation}
T^k_{ij}=\frac12 g^{k\ell}\bigl(\overline\nabla_i g_{j\ell}+\overline\nabla_j g_{i\ell}-\overline\nabla_\ell g_{ij}\bigr)\,.
\end{equation}
Thus, $T$ is linear in $\overline\nabla g$, with coefficients depending
smoothly on $g^{-1}$. The difference formula for the Ricci tensors is
\begin{equation}
(\Ric_g)_{ij}= (\Ric_{\overline g})_{ij} +\overline\nabla_kT^k_{ij}
-\overline\nabla_jT^k_{ik} +T^k_{k\ell}T^\ell_{ij}-T^k_{j\ell}T^\ell_{ik}\,.
\end{equation}
It follows from the preceding expression for $T$ that the only second--order
derivatives of $g$ in $\Ric_g$ occur in the terms $\overline\nabla T$. The non--elliptic second--order terms are precisely canceled by the corresponding second--order terms in $\mathcal L_{W(g)}g$, and the remaining second--order term is $g^{ij}\overline\nabla_i\overline\nabla_j g$. This is the usual Ricci--DeTurck identity; see, for example,~\cite{DeTurck1983,Topping2006}.
\end{proof}

Consequently, writing $u=g-\overline g$, system~\eqref{eq:extended} takes the
analytic form
\begin{equation}\label{eq:analytic-system}
\begin{cases}
&\partial_tu =g^{ij}\overline\nabla_i\overline\nabla_ju+Q(g,g^{-1},\overlinegrad g,\Rm_{\overline g})+\alpha r g\\
&\partial_tr=2|\Ric_g|_g^2-\alpha r^2+W^i(g)\overline\nabla_i r
\end{cases}
\end{equation}
(observing that $W(g)(r)=W^i(g)\partial_i r=W^i(g)\overline\nabla_i r$). Thus, the metric equation is second--order parabolic, whereas the scalar equation is first--order in $r$ and depends nonlinearly on the second derivatives of the metric through the curvature term.

We now want to apply the following special case of the local existence theorem of Vol'pert
and Khudyaev~\cite[Theorem~1]{VolpertKhudyaev1972}, adapted to closed manifolds.

\begin{lemma}[Vol'pert--Khudyaev on a closed manifold]
\label{lem:volpert-khudyaev}
Let $(M,\overline g)$ be a closed smooth Riemannian manifold and let
$E\to M$ be a smooth finite--rank vector bundle endowed with a connection
$\overline\nabla$. Consider
\begin{equation}\label{eq:VK-compact-system}
\begin{cases}
\partial_tU=a^{ij}(x,U)\overline\nabla_i\overline\nabla_jU+H(x,U,\overline\nabla U,V)\\
\partial_tV=b^i(x,U,\overline\nabla U)\overline\nabla_iV+K(x,U,\overline\nabla U,\overline\nabla^2U,V)
\end{cases}
\end{equation}
where $U$ is a section of $E$, $V$ is scalar and all the coefficients are
smooth and defined for all values of their arguments. Suppose that
$a^{ij}=a^{ji}$ and, for some $\lambda>0$,
\begin{equation}\label{eq:VK-uniform-ellipticity}
a^{ij}(x,U)\xi_i\xi_j\geqslant\lambda|\xi|_{\overline g}^2
\end{equation}
for every $x\in M$, $U\in E_x$ and $\xi\in T_x^*M$. Then, any smooth initial
data admit a unique smooth solution on $M\times[0,T)$ for some $T>0$.
\end{lemma}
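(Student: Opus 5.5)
The plan is to reduce the statement to the Euclidean theorem of Vol'pert and Khudyaev~\cite[Theorem~1]{VolpertKhudyaev1972} by a standard localization. Since $M$ is closed, I fix a finite atlas of coordinate charts $\{\Omega_\beta\}$ trivializing $E$, together with a subordinate partition of unity. In each chart, $\overline\nabla_i\overline\nabla_jU$ equals $\partial_i\partial_jU$ plus terms involving at most first derivatives of $U$, with smooth coefficients. Likewise $\overline\nabla_iV=\partial_iV$. The system~\eqref{eq:VK-compact-system} therefore becomes, in local coordinates, a composite system of exactly the form treated by Vol'pert--Khudyaev. It consists of a uniformly parabolic quasilinear block for the components of $U$, where ellipticity holds by~\eqref{eq:VK-uniform-ellipticity} and by comparability of $\overline g$ with the Euclidean metric on the compact chart closures. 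It is coupled to a first-order hyperbolic (transport) equation for $V$ whose forcing may depend on up to second derivatives of $U$.

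Rather than patching local solutions, I would rather run their proof directly on $M$, since it is intrinsically a contraction/iteration scheme in H\"older spaces. Concretely, I will do the following.

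\textbf{Step 1.} Given $V$ in a ball of $C^{k+\gamma}$-type parabolic H\"older space, solve the quasilinear parabolic equation for $U$. Here I use standard Schauder theory for parabolic systems on closed manifolds with diagonal principal part $a^{ij}(x,U)\overline\nabla_i\overline\nabla_j$. Linearize, use Schauder estimates, and close by a fixed point on a short time interval.

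\textbf{Step 2.} Given $U$ with $\overline\nabla^2U$ in a H\"older class, solve the transport equation for $V$ along the characteristics of the time-dependent vector field $b^i(x,U,\overline\nabla U)$. This field is globally defined on the compact $M$, so its flow exists for all times in $[0,T]$ with no boundary issues. The solution gives $V$ with the same spatial regularity as the forcing $K$, i.e. as $\overline\nabla^2U$.

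\textbf{Step 3.} Show that the composite map $V\mapsto U\mapsto V$ maps a ball to itself and is a contraction for small $T$, then bootstrap to $C^\infty$.

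The main obstacle is the apparent loss of derivatives in Step 3. The transport equation gains no regularity, yet $K$ involves $\overline\nabla^2U$, while $U$ is only two derivatives smoother than its forcing $H(\cdot,V)$. The key point, which is exactly the mechanism in Vol'pert--Khudyaev, is that $H$ depends on $V$ only at order zero, not on $\overline\nabla V$. Parabolic Schauder theory then gives $U\in C^{2+\gamma}$ in space when $V\in C^{\gamma}$, so $K\in C^\gamma$, so $V\in C^\gamma$ again: the loop closes at the same regularity level. Smallness comes from factors of $T^{\gamma/2}$ (or $T$) in the parabolic and transport estimates.

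Higher regularity follows by differentiating the system in space using $\overline\nabla$. The commutators are lower order, so the same closed loop repeats at each level $k$, giving $U\in C^{k+2+\gamma}$ and $V\in C^{k+\gamma}$ for all $k$. Time regularity then follows from the equations themselves.

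Uniqueness follows from the same contraction estimate applied to the difference of two smooth solutions, combined with an energy/Gronwall argument for the transport part. Local existence times are uniform by compactness of $M$.
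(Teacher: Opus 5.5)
\textbf{Gap in Step 3.} Your regularity bookkeeping is correct, and it is the right structural point. Since $H$ sees $V$ only at order zero, $V\in C^{\gamma}$ gives $U\in C^{2+\gamma}$, hence $K\in C^{\gamma}$, $b\in C^{1+\gamma}$, and $V\in C^{\gamma}$ again. So the map $V\mapsto U\mapsto\widetilde V$ does send a ball into itself for small $T$.

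The gap is the claim in Step 3 that this map is a \emph{contraction} at that same level. It is not, and no power of $T$ repairs this. Take inputs $V_1,V_2$ with outputs $\widetilde V_1,\widetilde V_2$, and set $b_\ell=b(x,U_\ell,\overline\nabla U_\ell)$. The difference $w=\widetilde V_1-\widetilde V_2$ solves
\[
\partial_t w=b_1^i\overline\nabla_i w+\bigl(b_1^i-b_2^i\bigr)\overline\nabla_i\widetilde V_2+(K_1-K_2),\qquad w(0)=0.
\]
The middle term needs $\overline\nabla\widetilde V_2$, which is not a function when $\widetilde V_2$ is only $C^{\gamma}$. In characteristic form the same problem appears as $|K_1(X_1(s))-K_1(X_2(s))|\leqslant[K_1]_{\gamma}|X_1-X_2|^{\gamma}$. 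Moving the characteristics costs a H\"older modulus, not a Lipschitz one, so the map is not even Lipschitz into $C^0$. This is the standard derivative loss in difference estimates for quasilinear transport, and it is a second obstruction, distinct from the one you analysed. Your uniqueness argument is unaffected, since for smooth solutions $\overline\nabla V_2$ is bounded.

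\textbf{How to close it, and how this compares with the paper.} The missing ingredient is a two-norm scheme. Bound the iterates in a higher class, e.g.\ $V\in C^{1+\gamma}$ and $U\in C^{3+\gamma}$, and prove contraction only in the lower class $V\in C^{\gamma}$, $U\in C^{2+\gamma}$. The bad term is then bounded by $\|U_1-U_2\|_{C^{1+\gamma}}\|\widetilde V_2\|_{C^{1+\gamma}}$, and the transport solution operator with zero data supplies the small factor. The high-norm ball is closed in the low-norm topology, so the limit exists and inherits the high bound. Alternatively, use Schauder's fixed-point theorem on a ball that is compact in a weaker H\"older space.

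This two-level structure is exactly what underlies the paper's proof, which never sets up a H\"older iteration. It transplants the Sobolev-space argument of Vol'pert--Khudyaev to $M$ using a finite trivializing atlas, a partition of unity and Friedrichs mollification. (Their method is not a H\"older contraction, as your proposal suggests.) This gives uniform $H^s$ bounds but strong convergence only in $C([0,T];H^{s-1})$. The choice $s>n/2+3$ ensures $H^{s-1}\hookrightarrow C^2$, which is what allows passing to the limit in $K(\dots,\overline\nabla^2U,\dots)$.

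Your characteristics-plus-Schauder route is more self-contained and avoids mollifiers, but it needs this two-level structure to close. In the bootstrap, also make sure the existence time does not shrink with $k$. One way is to use that the differentiated system is linear in its top-order derivatives, so higher norms persist as long as a fixed low norm stays controlled.
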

\begin{proof}
In local coordinates and a local frame of $E$,
system~\eqref{eq:VK-compact-system} is a special case
of~\cite[Theorem~1]{VolpertKhudyaev1972}. The matrices multiplying the
time derivatives are the identity, the characteristic matrices of the
scalar first--order equation are $1\times1$ and hence automatically
symmetric, and condition~\eqref{eq:VK-uniform-ellipticity} gives the
required parabolicity. The passage from $\mathbb R^n$ to $M$ is obtained
using a finite trivializing atlas and a partition of unity, regularizing
the local components by convolution with a fixed even smooth Friedrichs
kernel. Derivatives of the cutoffs, transition functions and connection
coefficients produce only lower--order terms, so the estimates and
uniqueness argument carry over. Smoothness of the coefficients and initial
data gives the remaining boundedness, local Lipschitz and Sobolev
assumptions. For $s>n/2+3$, the approximations converge strongly in
$C([0,T];H^{s-1})\hookrightarrow C([0,T];C^2)$, which permits passage to
the nonlinear dependence of the scalar equation on
$\overline\nabla^2U$. A standard bootstrap argument using the higher--order 
estimates yields smoothness for smooth initial data.
\end{proof}

\begin{proposition}\label{prop:extended-existence}
Let $g_0$ be a smooth Riemannian metric on the closed manifold $M$ and let
$r_0\in C^\infty(M)$. Then there exist $T>0$ and a unique smooth solution
$(g,r)$ of system~\eqref{eq:extended} on $M\times[0,T)$ with initial data
$(g_0,r_0)$.
\end{proposition}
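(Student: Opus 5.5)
The plan is to reduce \Cref{prop:extended-existence} to \Cref{lem:volpert-khudyaev}, applied with $E=\mathrm{Sym}^2T^*M$ carrying the Levi-Civita connection $\overline\nabla$ of $\overline g=g_0$, $U=u=g-\overline g$ and $V=r$, using the analytic form \eqref{eq:analytic-system} given by \Cref{lem:deturck}. The only difficulty is that \Cref{lem:volpert-khudyaev} requires coefficients that are smooth and defined for \emph{all} values of their arguments and uniformly elliptic, whereas \eqref{eq:analytic-system} involves $g^{-1}=(\overline g+u)^{-1}$. That inverse is defined only where $\overline g+u$ is positive definite, and its ellipticity constant degenerates near the boundary of the cone of metrics. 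I therefore plan to modify the coefficients outside a neighbourhood of the initial datum, solve the modified system, and check a posteriori that for a short time the solution stays inside the region where nothing was changed.

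Concretely, since $M$ is compact and $g_0=\overline g$, the initial value is $u(0)=0$. I fix a small $\delta>0$, say $\delta=1/4$, such that every symmetric $2$-tensor $u$ with $|u|_{\overline g}\leqslant 2\delta$ gives a metric $\overline g+u$ with $\tfrac12\overline g\leqslant \overline g+u\leqslant 2\overline g$. I then choose a smooth fibrewise map $\Phi\colon E\to E$ with $\Phi(u)=u$ for $|u|_{\overline g}\leqslant\delta$ and $|\Phi(u)|_{\overline g}\leqslant 2\delta$ everywhere, for instance $\Phi(u)=\chi(|u|^2_{\overline g})\,u$ with a suitable cutoff $\chi$. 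Next I replace every occurrence of $g$ and $g^{-1}$ in the coefficients by $\overline g+\Phi(u)$ and its inverse. Only the pointwise dependence on $u$ is cut off; the dependence on $\overline\nabla u$, $\overline\nabla^2u$ and $r$ is polynomial, so it is already globally defined.

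After this replacement, $a^{ij}(x,u)=(\overline g+\Phi(u))^{ij}$ is symmetric and satisfies \eqref{eq:VK-uniform-ellipticity} with $\lambda=1/2$. The remaining terms fit the lemma as follows.
\begin{itemize}
\item $H=Q+\alpha r(\overline g+\Phi(u))$ is smooth in $(x,u,\overline\nabla u,r)$.
\item $b^i=W^i$ depends smoothly on $(x,u,\overline\nabla u)$, by the formula for $T$.
\item $K=2|\Ric_g|^2_g-\alpha r^2$ depends smoothly on $(x,u,\overline\nabla u,\overline\nabla^2u,r)$. This uses the difference formula for $\Ric$ in the proof of \Cref{lem:deturck}.
\end{itemize}
Hence \Cref{lem:volpert-khudyaev} gives a unique smooth solution $(u,r)$ of the modified system on some $M\times[0,T')$. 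Because $u$ is continuous on the compact set $M\times[0,T'/2]$ and $u(0)=0$, there is $T\in(0,T']$ with $|u|_{\overline g}<\delta$ on $M\times[0,T)$. On that interval the modified system coincides with \eqref{eq:analytic-system}, so $g=\overline g+u$ is a Riemannian metric and $(g,r)$ solves \eqref{eq:extended}.

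For uniqueness, I take any smooth solution $(\tilde g,\tilde r)$ of \eqref{eq:extended} on $M\times[0,\tilde T)$. By continuity, $|\tilde g-\overline g|_{\overline g}<\delta$ on some $[0,T'')$, so on that interval it also solves the modified system, and the uniqueness part of \Cref{lem:volpert-khudyaev} gives agreement there. To extend agreement beyond the first time the two solutions might differ, I restart the argument at that time with the common value as new initial datum, which may require re-choosing the background $\overline g$ and the cutoff. Since the set of agreement is closed in $[0,\min(T,\tilde T))$ and, by this argument, also open, the two solutions coincide on the whole common interval. The main obstacle, though mostly bookkeeping, is this truncation step: the cutoff must make every coefficient globally defined, smooth and uniformly parabolic while leaving the system untouched near the data. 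A secondary point is the reading of the DeTurck term: in $W(g)(r)$ the derivative falls only on $r$ and the coefficient $W$ contains no second derivatives of $u$, so the scalar equation has exactly the transport structure $b^i(x,U,\overline\nabla U)\overline\nabla_iV$ that \Cref{lem:volpert-khudyaev} requires.
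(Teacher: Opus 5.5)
Your proposal is correct and follows essentially the paper's argument. You truncate the coefficients near $U=0$ so that \Cref{lem:volpert-khudyaev} applies; using your retraction $\Phi$ instead of the paper's multiplicative cutoff $\rchi$ is immaterial. You then use $u(0)=0$ and continuity to stay in the region where nothing was modified, and you get uniqueness by comparing any other solution with the cutoff system.

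The restart in your uniqueness step is unnecessary, and the paper does without it. Since $|u|_{\overline g}<\delta$ on all of $M\times[0,T)$, the first time $|\tilde u|_{\overline g}$ could reach $\delta$ would be a time at which $\tilde u=u$, which is impossible. If you do restart, only the reference point of the cutoff may move. The background $g_0$ in $W(g)$ must stay fixed, because changing it would change system~\eqref{eq:extended} itself.
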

\begin{proof}
We set
\begin{equation}
E=\operatorname{Sym}^2T^*M,\qquad U=g-\overline g,\qquad V=r.
\end{equation}
As long as $g=\overline g+U$ is positive definite,
system~\eqref{eq:analytic-system} has the form
of~\eqref{eq:VK-compact-system} with
\begin{align}
a^{ij}&=g^{ij},&
H&=Q(g,g^{-1},\overlinegrad g,\Rm_{\overline g})+\alpha Vg,\\
b^i&=W^i(g),&
K&=2|\Ric_g|_g^2-\alpha V^2.
\end{align}
Since $\Ric_g$ is affine in $\overline\nabla^2U$, the function $K$ is
generally quadratic in $\overline\nabla^2U$, as allowed in
\Cref{lem:volpert-khudyaev}. Choose $\delta>0$ such that
\begin{equation}\label{eq:VK-cutoff-region-minimal}
|U|_{\overline g}<2\delta\quad\Longrightarrow\quad\overline g/2\leqslant\overline g+U\leqslant2\overline g\,.
\end{equation}
Let $\rchi:E\to[0,1]$ be a smooth function, equal to one when $|U|_{\overline g}\leqslant\delta$, and with support compactly contained in
$\{|U|_{\overline g}<2\delta\}$. The products of $\rchi$ with $g^{ij}$, $H$, $W^i$ and $K$ extend smoothly
by zero outside the positive--metric region. Setting
\begin{align}
\widehat a^{ij}&=\rchi g^{ij}+(1-\rchi)\overline g^{ij},&
\widehat H&=\rchi H,\\
\widehat b^i&=\rchi W^i,&
\widehat K&=\rchi K,
\end{align}
these coefficients are smooth for all values of their arguments and condition~\eqref{eq:VK-cutoff-region-minimal} gives
\begin{equation}
\widehat a^{ij}\xi_i\xi_j\geqslant\frac12|\xi|_{\overline g}^2\,.
\end{equation}
Therefore \Cref{lem:volpert-khudyaev}, applied to
\begin{equation}
\begin{cases}
\partial_tU=\widehat a^{ij}\overline\nabla_i\overline\nabla_jU+\widehat H\\
\partial_tV=\widehat b^i\overline\nabla_iV+\widehat K
\end{cases}
\end{equation}
with initial data $U(0)=0$ and $V(0)=r_0$, gives a unique smooth solution
for a short time. After decreasing $T$, we have
$|U|_{\overline g}<\delta/2$ on $M\times[0,T)$, hence $\rchi=1$ and this
solution satisfies system~\eqref{eq:extended}.

If $(g',r')$ is another solution with the same initial data, we set
$U'=g'-\overline g$ and $V'=r'$. The pair $(U',V')$ solves the cutoff
system above as long as $|U'|_{\overline g}<\delta$. Since
$|U|_{\overline g}<\delta/2$, the equality of the two solutions prevents
$|U'|_{\overline g}$ from reaching $\delta$. Thus $(g',r')=(g,r)$
throughout the common interval of existence. This proves uniqueness.
\end{proof}

\section{Propagation of the constraint and recovery of the original flow}\label{sec:constraint}

We now assume $r_0=R_{g_0}$. We first show that the extended solution preserves the identity $r=R_g$, so that the metric equation reduces to the critical Ricci--Bourguignon flow equation with the DeTurck modification. We then recover the original geometric flow and conclude the proof of \Cref{thm:main}.

\begin{proposition}\label{prop:constraint}
Let $(g,r)$ be the smooth solution of system~\eqref{eq:extended} with $r(0)=R_{g_0}$. Then,
\begin{equation}
 r(t)=R_{g(t)}
\end{equation}
for every $t\in[0,T)$.
\end{proposition}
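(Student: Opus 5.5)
We set $\phi=R_g-r$ and show that $\phi$ satisfies a homogeneous linear parabolic equation with $\phi(0)=0$. The maximum principle (or an energy estimate) then gives $\phi\equiv0$ on $[0,T)$.

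\emph{Computing $\partial_tR_g$.} The first equation of system~\eqref{eq:extended} can be written as
\begin{equation}
\partial_t g=h,\qquad h=-2\Ric_g+\alpha R_g g+\mathcal L_{W(g)}g-\alpha\phi\, g .
\end{equation}
The scalar-curvature variation formula
\begin{equation}
DR_g[h]=-\Delta_g\tr_g h+\diver_g\diver_g h-\langle\Ric_g,h\rangle_g
\end{equation}
is linear in $h$, so we treat the three parts of $h$ separately.

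For $h_1=-2\Ric_g+\alpha R_g g+\mathcal L_{W(g)}g$, the introduction and \Cref{sec:extended} already give
\begin{equation}
DR_g[h_1]=2|\Ric_g|^2-\alpha R_g^2+W(g)(R_g).
\end{equation}
At the critical value $\alpha=1/(n-1)$ the Laplacian terms cancel, and the contracted Bianchi identity takes care of $\diver\diver\Ric$. The Lie-derivative part was computed before system~\eqref{eq:extended}.

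For the correction $h_2=-\alpha\phi g$ we have $\tr_g h_2=-n\alpha\phi$ and $\diver\diver h_2=-\alpha\Delta\phi$. Hence
\begin{equation}
DR_g[h_2]=n\alpha\Delta\phi-\alpha\Delta\phi+\alpha R_g\phi=(n-1)\alpha\Delta\phi+\alpha R_g\phi=\Delta_g\phi+\alpha R_g\phi.
\end{equation}
The coefficient $(n-1)\alpha=1$ is exactly where criticality helps: the Laplacian survives with a positive sign.

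\emph{Subtracting the $r$ equation.} Subtracting the second equation of system~\eqref{eq:extended} gives
\begin{equation}
\partial_t\phi=\Delta_g\phi+W(g)(\phi)-\alpha(R_g^2-r^2)+\alpha R_g\phi .
\end{equation}
The $|\Ric_g|^2$ terms cancel identically. Writing $R_g^2-r^2=(R_g+r)\phi$, this becomes
\begin{equation}
\partial_t\phi=\Delta_g\phi+W(g)(\phi)+c\,\phi,\qquad c=\alpha R_g-\alpha(R_g+r)=-\alpha r.
\end{equation}
This is linear and homogeneous in $\phi$. Its coefficients are smooth and bounded on $M\times[0,T']$ for every $T'<T$, and $\Delta_g$ is uniformly elliptic there. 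Moreover $\phi(0)=R_{g_0}-r_0=0$.

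\emph{Conclusion.} Set $\psi=e^{-Ct}\phi$ with $C>\sup|c|$ on $M\times[0,T']$. Applying the weak maximum principle on the closed manifold to $\psi$ and $-\psi$ gives $\psi\equiv0$. Alternatively, an $L^2$ estimate works: the drift term $\int\phi\, W(\phi)=-\tfrac12\int\diver W\,\phi^2$ is bounded by $C\int\phi^2$, the time derivative of the volume form is likewise controlled, and Gr\"onwall's inequality closes the argument. Since $T'<T$ is arbitrary, $r=R_g$ on $[0,T)$.

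The main obstacle is the bookkeeping in the first step. One must make sure that $\partial_t g$ is written with $r$ rather than $R_g$ in the $\alpha$-term, and then track exactly the Laplacian that the difference $-\alpha\phi g$ produces. Everything else is already contained in the formulas quoted in the introduction and \Cref{sec:extended}.
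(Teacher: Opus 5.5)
Your proof is correct and takes essentially the paper's route. The paper substitutes $v_0=-2\Ric_g+\alpha r g$ directly into the variation formula to get $\Delta_g(R_g-r)$ and $2|\Ric_g|_g^2-\alpha rR_g$; you instead split $\alpha r g=\alpha R_g g-\alpha\phi g$ and reuse the critical evolution of $R_g$. Both reach the same defect equation $\partial_t\phi=\Delta_g\phi+W(g)(\phi)-\alpha r\phi$ with $\phi(0)=0$, and your weighted maximum-principle (or $L^2$--Gr\"onwall) argument on $[0,T']$ just spells out the paper's ``standard parabolic maximum principle.''
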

\begin{proof}
For a metric variation $v=\partial_tg$, the scalar--curvature variation formula (see, e.g.,~\cite{Besse1987}) is
\begin{equation}\label{eq:variation-R}
 \partial_tR_g =-\Delta_g(\tr_gv)+\diver_g\diver_gv-g^{ik}g^{j\ell}v_{ij}(\Ric_g)_{k\ell}\,.
\end{equation}
Setting
\begin{equation}
 v_0=-2\Ric_g+\alpha rg\,,
\end{equation}
the contracted Bianchi identity gives
\begin{equation}
\tr_gv_0=-2R_g+\frac{n}{n-1}r\qquad\text{ and }\qquad\diver_gv_0=-\dd R_g+\frac{1}{n-1}\dd r\,.
\end{equation}
Therefore, recalling that $\alpha=1/(n-1)$, we have
\begin{equation}
-\Delta_g(\tr_gv_0)+\diver_g\diver_gv_0=2\Delta_gR_g-\frac{n}{n-1}\Delta_gr-\Delta_gR_g
+\frac{1}{n-1}\Delta_gr=\Delta_g(R_g-r)\,.
\end{equation}
Setting $v=v_0+\mathcal L_{W(g)}g$, we then obtain
\begin{align}
\partial_tR_g=&\,\Delta_g(R_g-r)-g^{ik}g^{j\ell}(v_0)_{ij}(\Ric_g)_{k\ell}+W(g)(R_g)\\
=&\,\Delta_g(R_g-r)+2|\Ric_g|_g^2-\alpha rR_g+W(g)(R_g)\,,\label{eq:R-extended-evolution}
\end{align}
since the variation of the scalar curvature in the direction $\mathcal L_{W(g)}g$ is $\mathcal L_{W(g)}R_g=W(g)(R_g)$ and there holds
\begin{equation}\label{eq:inner-v}
-g^{ik}g^{j\ell}(v_0)_{ij}(\Ric_g)_{k\ell}=2|\Ric_g|_g^2-\alpha rR_g\,.
\end{equation}
Subtracting the second equation in system~\eqref{eq:extended} and setting
\begin{equation}
 z=R_g-r
\end{equation}
gives the homogeneous linear PDE
\begin{equation}\label{eq:defect}
\partial_tz=\Delta_gz+W(g)(z)-\alpha rz\qquad\text{ with }\qquad z(0)=0\,.
\end{equation}
Then, by the standard parabolic maximum principle, we conclude that $z$ is identically zero on $M\times[0,T)$, which proves the proposition.
\end{proof}

It follows that the first equation in system~\eqref{eq:extended} reduces to the critical Ricci--Bourguignon flow equation with the DeTurck modification:
\begin{equation}\label{eq:gauged-final}
\partial_tg=-2\Ric_g+\alpha R_gg+\mathcal L_{W(g)}g\,.
\end{equation}

Let $g(t)$ be the solution of equation~\eqref{eq:gauged-final} on $[0,T)$ and set $X_t=-W(g(t))$. Since $M$ is closed and $X_t$ is smooth, its time--dependent flow is defined on the whole interval $[0,T)$; equivalently, solve the ordinary differential equation
\begin{equation}\label{eq:diffeomorphism}
\partial_t\Phi_t=-W(g(t))\circ\Phi_t\qquad\text{ with }\qquad \Phi_0=\Id_M\,,
\end{equation}
then, every $\Phi_t$ is a diffeomorphism. Define
\begin{equation}
\widetilde g(t)=\Phi_t^*g(t)\,,
\end{equation}
then, 
\begin{equation}
\partial_t\widetilde g=\Phi_t^*\bigl(\partial_tg+\mathcal L_{X_t}g\bigr)
=\Phi_t^*\bigl(\partial_tg-\mathcal L_{W(g)}g\bigr)
=-2\Ric_{\widetilde g}+\alpha R_{\widetilde g}\widetilde g
\end{equation}
(where in the last equality we used the identities $\Ric_{\Phi_t^*g}=\Phi_t^*\Ric_g$ and
$R_{\Phi_t^*g}=\Phi_t^*R_g$); moreover, $\widetilde g(0)=g_0$. This proves the existence part of \Cref{thm:main}.

We then prove uniqueness: let $\widehat g(t)$ be a smooth solution of equation~\eqref{eq:main-ivp} with $\widehat g(0)=g_0$. Let $\psi$ be the solution of the {\em harmonic map flow}
\begin{equation}\label{eq:harmonic-map-heat}
\partial_t\psi=\tau_{\widehat g(t),\overline g}(\psi)
\qquad\text{ with }\qquad
\psi(0)=\Id_M\,,
\end{equation}
where $\tau_{\widehat g(t),\overline g}(\psi)$ denotes the {\em tension field} of the map
$\psi:(M,\widehat g(t))\to(M,\overline g)$ (see, for instance,~\cite{EellsSampson1964,Topping2006}).
In local coordinates, equation~\eqref{eq:harmonic-map-heat} reads
\begin{equation}
(\partial_t\psi)^m=\widehat g^{ij}\bigl(\partial_i\partial_j\psi^m
-\widehat\Gamma^k_{ij}\partial_k\psi^m+\overline\Gamma^m_{\ell s}
\partial_i\psi^\ell\partial_j\psi^s\bigr)\,,
\end{equation}
which is clearly a strictly parabolic system for $\psi$, hence it has a unique smooth short--time solution, as in the standard DeTurck uniqueness argument~\cite{DeTurck1983,EellsSampson1964,Topping2006}.\\
Since $\psi(0)=\Id_M$, after possibly reducing the interval, the map $\psi(t)$ remains a diffeomorphism, thus, setting
\begin{equation}
\varphi=\psi^{-1}\qquad\text{ and }\qquad g=\varphi^*\widehat g\,,
\end{equation}
the transformation rule for the tension field $\tau$ under diffeomorphisms of the domain gives
\begin{equation}\label{eq:tension-W}
\tau_{\widehat g,\overline g}(\psi)=\tau_{g,\overline g}(\Id_M)\circ\psi=-W(g)\circ\psi\,,
\end{equation}
because
\begin{equation}
\tau_{g,\overline g}(\Id_M)^k=g^{ij}\bigl(\overline\Gamma^k_{ij}-\Gamma^k_{ij}(g)\bigr)=-W^k(g)\,.
\end{equation}
Hence, equation~\eqref{eq:harmonic-map-heat} implies
\begin{equation}\label{eq:psi-ode}
\partial_t\psi=-W(g)\circ\psi\,,
\end{equation}
then, differentiating $\psi\circ\varphi=\Id_M$ and using equation~\eqref{eq:psi-ode}, we obtain
\begin{equation}
\partial_t\varphi=\dd\varphi\bigl(W(g)\bigr)\,.
\end{equation}
Indeed,
\begin{equation}
(\partial_t\varphi)\circ\varphi^{-1}=\varphi_*W(g),\qquad\varphi^*\mathcal L_{\varphi_*W(g)}\widehat g=\mathcal L_{W(g)}g.
\end{equation}
Consequently, the time--dependent pullback formula, together with the identities 
\begin{equation}
\Ric_{\varphi^*\widehat g}=\varphi^*\Ric_{\widehat g}\qquad\text{ and }\qquad R_{\varphi^*\widehat g}=\varphi^*R_{\widehat g}\,,
\end{equation}
yields
\begin{equation}
\partial_tg=\varphi^*(\partial_t\widehat g)+\mathcal L_{W(g)}g=-2\Ric_g+\alpha R_gg+\mathcal L_{W(g)}g\,.
\end{equation}
Hence, $g$ satisfies equation~\eqref{eq:gauged-final}, and therefore $(g,R_g)$ solves system~\eqref{eq:extended}.

Now let $\widehat g_1$ and $\widehat g_2$ be two solutions of equation~\eqref{eq:main-ivp} with the same initial metric, and perform the preceding construction for each of them. On a common short interval, the corresponding extended pairs agree by \Cref{prop:extended-existence}; denote their common metric by $g$. By equation~\eqref{eq:tension-W}, the two maps satisfy the same ordinary differential equation
\begin{equation}
\partial_t\psi_i=-W(g)\circ\psi_i\qquad\text{ with }\qquad \psi_i(0)=\Id_M\,,
\end{equation}
hence, the ODE uniqueness theorem gives $\psi_1=\psi_2$, and since $\widehat g_i=\psi_i^*g$, we conclude that
$\widehat g_1=\widehat g_2$ on this short interval.

To extend the conclusion to the whole common interval of existence, suppose that the two solutions agree up to some time $t_0$. Restart the preceding harmonic map heat construction at $t_0$, taking their common metric $\widehat g_1(t_0)=\widehat g_2(t_0)$ as the new background metric and the identity as initial map. The same local argument extends equality beyond $t_0$. A standard open--closed continuation argument therefore proves uniqueness on the whole common interval and completes the proof of \Cref{thm:main}.

\begin{corollary}[Three--dimensional Schouten flow]\label{cor:three-d}
Let $(M^3,g_0)$ be a closed smooth Riemannian three--manifold. There exists a unique smooth short--time solution of
\begin{equation}
\partial_tg=-2A_g=-2\biggl(\Ric_g-\frac14R_gg\biggr)\qquad\text{ with }\qquad g(0)=g_0\,.
\end{equation}
\end{corollary}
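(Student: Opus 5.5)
The corollary follows directly from \Cref{thm:main} with $n=3$; the only thing to check is that the critical equation coincides with the Schouten flow with no time rescaling. For $n=3$ we have $\alpha=\frac{1}{n-1}=\frac12$, so equation~\eqref{eq:main-ivp} reads
\begin{equation}
\partial_tg=-2\Ric_g+\frac12R_gg=-2\Bigl(\Ric_g-\frac14R_gg\Bigr)\,.
\end{equation}
On the other hand, the Schouten tensor in dimension three is
\begin{equation}
A_g=\frac{1}{n-2}\Bigl(\Ric_g-\frac{R_g}{2(n-1)}g\Bigr)=\Ric_g-\frac14R_gg\,,
\end{equation}
since $n-2=1$ and $2(n-1)=4$. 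The right-hand side of the equation in the corollary is therefore exactly the right-hand side of~\eqref{eq:main-ivp}. Equivalently, in the time change $\tau=(n-2)t$ from the introduction we have $\tau=t$.

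With the two equations identified, I would apply \Cref{thm:main} to the closed manifold $M^3$ and the smooth metric $g_0$, which is allowed since $n=3\geqslant3$. This gives $T>0$ and a unique $g\in C^\infty(M\times[0,T))$ solving $\partial_tg=-2A_g$ with $g(0)=g_0$.

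Uniqueness transfers verbatim, because the two initial value problems are literally the same. There is no real obstacle here; the only point needing care is the arithmetic of the constants $\frac{1}{n-2}$ and $\frac{1}{2(n-1)}$ at $n=3$.
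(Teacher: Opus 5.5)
Your proposal is correct and matches the paper's own (implicit) argument. The corollary is \Cref{thm:main} at $n=3$, where $\alpha=\tfrac12$ and $-2(n-2)A_g=-2A_g=-2\Ric_g+\tfrac12R_gg$, so the two initial value problems coincide and no time rescaling is needed.
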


\end{document}